\newcommand{\R}{\mathbb{R}}
\newtheorem{theorem}{Theorem}[section]
\newtheorem{proposition}[theorem]{Proposition}
\newtheorem{remark}[theorem]{Remark}
\journal{the European Journal of Control}
\begin{document}
\begin{frontmatter}

\title{Regular path-constrained time-optimal control problems in three-dimensional flow fields\tnoteref{mytitlenote}}
\tnotetext[mytitlenote]{This article is an extended version of the proceeding paper \cite{khaliletal.2018_naples} accepted in the European Control Conference, Naples Italy 25--28 June 2019.}

%% Group authors per affiliation:
%\author{Elsevier\fnref{myfootnote}}
%\address{Radarweg 29, Amsterdam}
%\fntext[myfootnote]{Since 1880.}

%% or include affiliations in footnotes:
%\author[mymainaddress,mysecondaryaddress]{R. Chertovskih}
\author[mymainaddress]{R. Chertovskih\corref{visiting}}
\cortext[visiting]{The computational results were performed during a visiting stay at the Federal Research Center ``Computer Science and Control'', Russian Academy of Science, Moscow, Russia}
\author[mymainaddress,mysecondaryaddress]{D. Karamzin}
%\ead[url]{www.elsevier.com}

\author[mymainaddress]{N.T. Khalil\corref{mycorrespondingauthor}}
\cortext[mycorrespondingauthor]{Corresponding author}
\ead{khalil.t.nathalie@gmail.com}
\ead[url]{www.nathaliekhalil.com}

\author[mymainaddress]{F.L. Pereira}
%\ead[url]{www.elsevier.com}

\address[mymainaddress]{Research Center for Systems and Technologies (SYSTEC), Electrical and Computer
	Engineering Department, Faculty of Engineering, University of Porto, 4200-465, Portugal}
\address[mysecondaryaddress]{Federal Research Center ``Computer Science and Control'', Russian Academy of Science, Moscow, Russia}

\begin{abstract}
	
This article concerns a class of time-optimal state constrained control problems with dynamics defined by an ordinary differential equation involving a three-dimensional steady flow vector field. The problem is solved via an indirect method based on the maximum principle in Gamkrelidze's form. The proposed computational method essentially uses a certain regularity condition imposed on the data of the problem. The property of regularity guarantees the continuity of the measure multiplier associated with the state constraint, and ensures the appropriate behavior of the corresponding numerical procedure which, in general, consists in computing the entire field of extremals for the problem in question. Several examples of vector fields are considered to illustrate the computational approach.
\end{abstract}

\begin{keyword}
Optimal control \sep state constraints \sep maximum principle \sep indirect numerical methods \sep regularity conditions
\MSC[2010] 49K15\sep 49M05
\end{keyword}

\end{frontmatter}

\newpage
\section{Introduction}\label{Section_1}
This article concerns the study of time-optimal state-constrained problems and represents an extended version of \cite{khaliletal.2018_naples}. In this article, a particular class of state-constrained time-optimal control problems is considered which satisfies a certain condition of regularity with respect to state constraints. Necessary optimality conditions in the form of the maximum principle and numerical techniques are brought together to shape an indirect method to solve the problem. The fundamental challenge encountered while applying indirect methods in the presence of state constraints is due to the fact that the Lagrange multiplier is a Borel measure whose support is embedded in the set of points in time at which the state trajectory meets the boundary of the state constraint.
The singular component of this measure, more precisely, the atoms, prevent the correct execution of the proposed numerical algorithm. For this reason, in this work, we limit our study to the class of regular problems which satisfy the regularity assumptions with respect to the state constraints proposed in Chapter 6 of the classic monograph~\cite{pontryagin1962mathematical}. This property entails the absence of the atoms, -- that is, the continuity of the measure-multiplier which has been examined, for example, in
\cite{Hager_1979,Maurer_1979,Galbraith_Vinter_2003,Arutyunov_Karamzin_2015,Karamzin_Pereira_2019}.
More precisely, a class of three-dimensional regular state-constrained time-optimal control problems affine with respect to the control and subject to a steady flow field is considered. A similar problem was investigated in \cite{khaliletal.2018_naples} for the case of cylindrical state constraints. Our paper extends the results of \cite{khaliletal.2018_naples} to the case of state constraint in the form of the unit sphere, and of a torus. Although the considered problems are cast in an abstract context, it is not difficult to imagine applications in which they might arise. Precision of the navigation of autonomous vehicles is a key feature for the success of its operation. The shape of the region in which the position error is below a pre-determined  threshold defined in the system requirements depends on the localization method and this might be a cylinder, a sphere, a torus, or even some other more complex shape. Another important case consists in the data sampling underwater currents flowing in the oceanic water column with autonomous underwater vehicles. These underwater currents are, literally, cylinders of water with characteristics quite different from the surrounding water and the motion of the autonomous vehicle should be constrained to this cylinder.

Moreover, our paper extends the analysis in \cite{khalilet.alAUV_2018} where the
two-dimensional case is exploited, to the three-dimensional framework. 
For the proposed optimal control problem, we apply the Pontryagin maximum principle and
derive the associated two-point boundary value problem given with respect to the optimal
control and the measure multiplier. The formulae for the optimal control and for the measure multiplier are derived from
the maximum condition, while the expression for the measure multiplier essentially relies on the regularity. The optimal control and the measure multiplier are expressed in
terms of the state and the adjoint variables. The two-point boundary value problem, which
now depends only on the state variable and on the adjoint variable, being solved, leads to the
determination of the extremals. The continuity of the measure multiplier is the key property
enabling the computation of the junction points of these extremals. 

The three-dimensional case considered here is more complex from computational point of view in comparison to the
two-dimensional case studied in \cite{khalilet.alAUV_2018}. Solution to the corresponding boundary value problem 
requires to find two (instead of one in the two-dimensional case) parameters determining the initial value of 
the adjoint variable. 

Optimal control problems in the presence of state constraints have been widely investigated
in the literature. The classical theory in the field can be found, for example, in
\cite{pontryagin1962mathematical,Dubovitskii_Milyutin_1965,Halkin_1970}
while more recent studies complemented it with such important issues as non-degeneracy and
normality of the maximum principle, non-smooth aspects, non-regular situations, etc., see,
for example,
\cite{Arutyunov_Tynyanskiy_1985,Vinter_Fereira_1994,Arutyunov_Aseev_1997,Arutyunov_2000,Vinter_2000,fontes_2015,Bettiol_Khalil_Vinter_2016},
among others. Computational techniques to solve such problems and to find the set of
extremals have also been broadly studied in the both framework of direct and indirect
methods. Here we refer the reader to the sources
\cite{Bryson_1969,Jacobson,Betts_1993,Fabien,Maurer_2000,Pytlak,Haberkorn_2011,Keulen}
for various issues on numerical solutions to state-constrained problems including direct and
indirect approaches, and to \cite{Jacobson,Fabien,Maurer_2000,Pytlak,Haberkorn_2011,Keulen} where merely indirect numerical methods have been investigated. Numerical methods, closely related to the shooting method, to solve state constrained optimal control problems for nonlinear ODE, were investigated for instance in \cite{malanowski1998sensitivity,bonnans2013shooting}. In \cite{malanowski1998sensitivity}, first-order inequality state constraint are studied. Under some assumptions (for instance strong Legendre-Clebsch condition on the second order derivative of an augmented Hamiltonian, controllability, etc.), which are stronger than ours, the original problems with inequality constraints are locally replaced by some
	auxiliary problems with equality constraints. The classical implicit function theorem is
	applied to the latter problem, and sensitivity results are obtained. The measure multiplier of the original problem is constructed by using the Lagrange multipliers of the auxiliary problem, and in such a way it is continuous. In \cite{bonnans2013shooting}, the analysis is extended to consider second-order state constraint inequalities.

The article is organized as follows. In Section \ref{Section_2}, the problem is formulated
and the regularity concept is presented. In Section \ref{Section_3}, the nondegenerate
maximum principle for the investigated problem is stated. Section \ref{sec:
applications} concerns the application of the maximum principle for the cases in which the
state constraint sets are cylinder, unit sphere, and torus. Explicit formulae for the
extremal control and the measure multiplier are obtained for each case. In section
\ref{Section_4}, we describe the numerical method and illustrate it by showing computational
results for the cylindrical and spherical state constraint cases. Finally, in Section \ref{Section_5}, a brief conclusion and some perspectives of future research are 
given.

\section{Problem formulation}\label{Section_2}

We study a vehicle moving in a three-dimensional bounded and closed state domain defined by the given state constraints. The motion of the vehicle is affected by the presence of the fluid flow vector field $v(x)$, which intervenes in the dynamical control system. The path-constrained time-optimal control problem investigated in this article is as follows:
\begin{equation}
\begin{aligned}\label{problem}
& {\text{Minimize}}
& & T \\
& \text{subject to}
& &  \dot x = u + v(x), \\
&&& x(0)=A,\;\;x(T)=B, \\
&&&  \ g(x) \leq 0  \quad \text{for all } x , \\
&&& h(u):= u_1^2 + u_2^2 + u_3^2 - 1 \le 0.
\end{aligned}
\end{equation}
Here, $x=(x_1,x_2,x_3)$ is the state variable, $u=(u_1,u_2,u_3)$ is the control variable. A measurable function $u(\cdot):[0,T]\to \R^3$ is termed control. The point $A$ is the starting point, while $B$ is the terminal point, and $v:\mathbb{R}^3 \to \mathbb{R}^3$ is a smooth map defining a fluid flow varying in space. The terminal time $T$ is supposed to be minimized.

The state constraint is defined by the given function $g: \R^3 \to \R$. Regarding this function, it is assumed in what follows that $\nabla g(x) \ne 0$ for all $x$ such that $g(x)=0$. Thus, the level set $g(x)=0$ is the so-called regular surface (manifold). Three different cases of the regular surface will be considered in the computational part of this work: cylinder, sphere and torus.

Consider the scalar product of the dynamics and the gradient of the state constraint function:
$$
\Gamma(x,u) = \langle \nabla g(x) , u+v(x) \rangle.
$$
If a feasible control process is considered in the arguments of $\Gamma$, then this function yields the total  time derivative of the state constraint function with respect to the control differential system in study. Following \cite{pontryagin1962mathematical,Arutyunov_Karamzin_2015,Karamzin_Pereira_2019}, consider the a priori regularity condition imposed on the data of problem.

\medskip
\noindent
{\bf Regularity condition.} \label{def:regularity} Assume that for all $x\in\mathbb R^3$ and $u\in \R^3$, such that $g(x)=0$, $\Gamma(x,u)=0$, $h(u)=0$, the set of vectors $\pdv{\Gamma}{u}$ and $\nabla {h(u)}$ is linearly independent.

\medskip
\noindent
\begin{remark}\label{remark:regularity implies controllability} It is simple to show that the regularity condition implies that the controllability conditions w.r.t. the state constraints, also known as IPC (Inward Pointing Condition) and OPC (Output Pointing Condition), are valid along any feasible trajectory (cf. \cite{Arutyunov_2000} and the bibliography cited therein). These conditions intervene in proving the non-degeneracy of the maximum principle.
\end{remark}

The next proposition provides conditions guaranteeing the existence of solution to Problem (\ref{problem}). The existence of solution is important for the forthcoming numerical analysis.

\medskip
\noindent
\begin{proposition} \label{proposition: solution existence}
Assume that the regularity condition holds and that the vector field verifies $|v(x)|< 1$ for all $x=(x_1,x_2,x_3)$ such that $g(x)< 0$. Assume also that $A$, and $B$ belong to the same connected component of the feasible state domain.

Then, Problem (\ref{problem}) has a solution.
\end{proposition}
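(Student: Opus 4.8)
The plan is to invoke the classical Filippov existence theorem (or the Filippov–Cesari theorem) for time-optimal control problems with state constraints, so the work reduces to verifying its hypotheses. First I would establish that the feasible set of control processes is nonempty: since $A$ and $B$ lie in the same connected component of the feasible state domain $\{x : g(x)\le 0\}$, I would join them by a $C^1$ arc $\gamma$ contained in that component, parametrize it, and then use the strict inequality $|v(x)| < 1$ on the interior together with the regularity condition (which, by Remark~\ref{remark:regularity implies controllability}, yields the IPC/OPC controllability) to reparametrize $\gamma$ as an admissible trajectory: along the interior one can always choose $u = \dot\gamma - v(x)$ after slowing the parametrization enough that $|\dot\gamma| \le 1 - |v(x)|$, and near boundary contact the inward/outward pointing conditions let one perturb the arc so that $g < 0$ except possibly at isolated points while keeping $|u|\le 1$. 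Hence there exists at least one feasible process with some finite transfer time $T_0$, so the infimum $T^* := \inf T$ is finite and nonnegative.

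Next I would set up the compactness argument. Take a minimizing sequence $(x_k(\cdot), u_k(\cdot), T_k)$ with $T_k \downarrow T^*$. A uniform a priori bound on the trajectories is needed; this follows because the state domain $\{g \le 0\}$ is bounded and closed by hypothesis, so all $x_k(t)$ stay in a fixed compact set $K$, and then $|\dot x_k| = |u_k + v(x_k)| \le 1 + \max_K |v|$ gives a uniform Lipschitz bound. By Arzelà–Ascoli, passing to a subsequence, $x_k \to x^*$ uniformly on $[0,T^*]$ (after extending each $x_k$ to the common interval, e.g. by the constant $B$ beyond $T_k$, or by rescaling time to $[0,1]$, which is the cleaner bookkeeping). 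The controls $u_k$ are bounded in $L^\infty$, hence, along a further subsequence, converge weakly-$*$ in $L^\infty([0,1];\R^3)$ to some $u^*$. The velocity set $F(x) = \{\,u + v(x) : |u|\le 1\,\}$ is, for each $x$, a closed ball, hence compact and convex, and $F$ depends continuously on $x$; this convexity is exactly what is required to pass the limit through the differential inclusion $\dot x \in F(x)$ and conclude that $x^*$ is an admissible trajectory generated by $u^*$, with $x^*(0) = A$, $x^*(T^*) = B$.

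The remaining point is feasibility of the limit with respect to the constraints and optimality. The control constraint $h(u^*) \le 0$ passes to the limit because the unit ball is weakly-$*$ closed (equivalently, $\int |u^*|^2 \le \liminf \int |u_k|^2$ is not even needed — pointwise a.e. membership in a fixed convex compact set is preserved under weak-$*$ limits). The state constraint $g(x^*(t)) \le 0$ for all $t$ passes to the limit by uniform convergence of $x_k$ and continuity of $g$. Finally, by construction $x^*$ transfers $A$ to $B$ in time $T^*$, so $T^*$ is attained and the process $(x^*, u^*, T^*)$ is optimal. The main obstacle, and the only place where the specific structure of this problem enters rather than a black-box theorem, is the nonemptiness step: one must genuinely use the strict inequality $|v| < 1$ on the open set $\{g < 0\}$ (which guarantees a margin for the control to realize any direction of motion in the interior) together with the controllability consequences of the regularity condition (to handle arcs that touch or run along the boundary $\{g = 0\}$) in order to produce even one admissible process connecting $A$ to $B$; everything after that is the standard direct method, and the compactness/closure arguments go through because the velocity sets are convex.
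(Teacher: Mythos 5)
Your proposal is correct and follows essentially the same route as the paper: reduce existence to the nonemptiness of the set of feasible processes (handled, as in the paper, by connecting $A$ and $B$ through the interior using $|v(x)|<1$ and invoking the IPC/OPC consequences of the regularity condition when an endpoint lies on the boundary), and then conclude by compactness and convexity of the velocity sets. The only difference is cosmetic: the paper delegates the second step to a citation of Filippov's theorem, whereas you unpack the minimizing-sequence, Arzel\`a--Ascoli and weak-$*$ closure argument explicitly.
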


\begin{proof} Problem (\ref{problem}) is linear while the velocity set is convex and compact. Therefore, in order to ensure the existence of a solution, due to Filippov's theorem, \cite{Filippov_1959}, it is sufficient to justify the existence of at least a single feasible path connecting $A$ and $B$. Consider the connected component of the feasible state domain in which points $A$ and $B$ lie. If points $A$, and $B$ are in the interior of the feasible state domain, then such a path is guaranteed, as $A$ and $B$ can be connected by a smooth curve lying in the interior of the state constraint set, while the full controllability at each point of time is entailed by the condition $|v(x)|< 1$. If one of the point $A$, or $B$, belongs to the boundary, then, by using the regularity of the flow field $v(x)$, which implies the controllability conditions (see Remark \ref{remark:regularity implies controllability} above), there exists a feasible path from $A$ (or $B$) to some close to it point in the interior of the feasible state domain. Then, this case is reduced to the one already considered as the condition $|v(x)|< 1$ once again intervenes at that ``close'' point to guarantee the existence of a feasible path from the starting point $A$ to the terminal point $B$. \end{proof}

\section{Maximum principle}\label{Section_3}

For the maximum principle in Gamkrelidze's form which we consider, the extended Hamilton-Pontryagin function is defined as:
$$
\bar H(x,u,\psi,\mu) = \langle\psi,u + v(x)\rangle - \mu \Gamma(x,u),
$$
where $\psi \in \mathbb R^3$, $\mu \in \mathbb R$ are the adjoint variables.

Assume that the regularity condition holds. Then, for an optimal process $(x^*,u^*,T^*)$, the maximum principle derived in \cite{Arutyunov_Karamzin_2015} (see Theorem 4.5 therein) ensures the existence of Lagrange multipliers: a number $\lambda\in[0,1]$, an absolutely continuous adjoint arc $\psi=(\psi_1,\psi_2, \psi_3) \in W_{1,\infty}([0, T^*]; \mathbb R^3)$, and a scalar continuous function $\mu\in C([0, T^*]; \mathbb R)$, such that the following conditions are satisfied:
\begin{itemize}
	\item[(a)]\label{item: adjoint system}Adjoint equation
	\begin{align*}
	\dot \psi(t) & = -\pdv{\bar H}{x} (x^*(t),u^*(t),\psi(t),\mu(t))  \qquad \text{a.e. } t \in [0,T^*];
	\end{align*}
	\item[(b)]\label{item: max condition} Maximum condition
	\begin{align*}u^*(t) & \in \mathop{\rm argmax}_{u \in U} \{{\bar H}(x^*(t),u,\psi(t),\mu(t)) \} \qquad  \text{a.e. } t\in [0,T^*]
	\end{align*} where $U:= \{ u \in \R^m \ : \ h(u) \leq 0 \}$;
	\item[(c)]\label{item: conservation law} Conservation law
	$$
	\max_{u\in U} \{ \bar H(x^*(t),u,\psi(t),\mu(t)) \} =\lambda \quad \text{for all }\,t\in [0,T^*];
	$$where $U:= \{ u \in \R^m \ : \ h(u) \leq 0 \}$;
	\item[(d)]\label{item: measure continuity} $\mu(t)$ is decreasing, and constant on the time intervals where $g(x^*(t)) < 0$;
	\item[(e)] \label{item: nontriviality condition} Non-triviality condition
	$$
	|\psi(t)-\mu(t)\nabla g(x^*(t))| > 0 \quad \text{ for all }\, t\in [0,T^*].
$$
\end{itemize}

Above, $T^*$ stands for the optimal time, thus, the optimal pair $(x^*,u^*)$ is considered over the time interval $[0,T^*]$.

\section{Applications} \label{sec: applications}

In this part of work, we consider three particular cases of state constraint sets in $\R^3$. For each case, we derive the corresponding adjoint system, and explicit the expressions of the measure multiplier and the optimal control with respect to the state and adjoint variables.

The following simple assertion ensuring the regularity condition under some assumptions on $g$ and $v(x)$ facilitates the analysis of the on-going applications.

\begin{proposition}\label{proposition: regularity general case} Assume that there exists a number $r\in \mathbb R\ne 0$ such that $|\nabla g(x)|=r$ for all $x:$ $g(x)=0$. Then, the regularity condition is satisfied whenever the following condition on the steady flow field $v(x)$ is imposed:
\begin{equation} \label{regularity assumption} |\langle \nabla g(x), v(x)\rangle| < r \;\; \quad \forall\,x:\;g(x)=0.   \end{equation}
\end{proposition}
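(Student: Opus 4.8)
The plan is to exploit the very simple structure of the two gradient vectors appearing in the regularity condition. Since $\Gamma(x,u)=\langle\nabla g(x),u+v(x)\rangle$ is affine in $u$, we have $\pdv{\Gamma}{u}=\nabla g(x)$, and since $h(u)=u_1^2+u_2^2+u_3^2-1$ we have $\nabla h(u)=2u$. Hence the regularity condition reduces to the claim: for every $x$ with $g(x)=0$ and every $u$ with $|u|=1$ and $\langle\nabla g(x),u+v(x)\rangle=0$, the vectors $\nabla g(x)$ and $u$ are not proportional. Both vectors are nonzero --- $\nabla g(x)\ne 0$ by the standing assumption on $g$, and $u\ne 0$ because $|u|=1$ --- so linear dependence of the pair is equivalent to the existence of a scalar $\alpha\ne 0$ with $u=\alpha\nabla g(x)$.

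First I would argue by contradiction: suppose that at some such admissible pair $(x,u)$ the two vectors are dependent, i.e.\ $u=\alpha\nabla g(x)$ with $\alpha\ne 0$. Taking norms and using $|\nabla g(x)|=r$ gives $|\alpha|=1/r$ (this is where the hypothesis $r\ne 0$ enters). Substituting $u=\alpha\nabla g(x)$ into the constraint $\Gamma(x,u)=0$ yields
\[
0=\langle\nabla g(x),\alpha\nabla g(x)+v(x)\rangle=\alpha r^2+\langle\nabla g(x),v(x)\rangle ,
\]
so $\langle\nabla g(x),v(x)\rangle=-\alpha r^2$ and therefore $|\langle\nabla g(x),v(x)\rangle|=|\alpha|r^2=r$. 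This contradicts the assumed strict inequality (\ref{regularity assumption}), and the proposition follows.

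There is essentially no deep obstacle here; the argument is a one-line computation once the two gradients are identified. The only points needing a moment's care are (i) checking that both gradient vectors are nonzero, so that ``linearly dependent'' genuinely means ``scalar multiples of one another'' and the degenerate case of a vanishing vector is excluded, and (ii) making sure the hypothesis $r\ne 0$ is invoked at the step where one divides by $r$. One may additionally note that the computation exhibits the bound in (\ref{regularity assumption}) as sharp: if equality $|\langle\nabla g(x),v(x)\rangle|=r$ held at some boundary point, then choosing $u=\alpha\nabla g(x)$ with the appropriate sign would produce a dependent admissible pair, so regularity could indeed fail.
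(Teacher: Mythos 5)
Your proof is correct and follows essentially the same route as the paper: identify $\pdv{\Gamma}{u}=\nabla g(x)$ and $\tfrac12\nabla h(u)=u$, use $\Gamma(x,u)=0$ to relate $\langle\nabla g(x),u\rangle$ to $\langle\nabla g(x),v(x)\rangle$, and conclude that the strict inequality rules out proportionality. The only cosmetic difference is that the paper normalizes to $r=1$ and phrases the step as ``two unit vectors with inner product of modulus less than one are independent,'' while you keep general $r$ and argue by explicit contradiction; your closing remark on sharpness is a correct bonus observation.
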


\begin{proof} It is not restrictive to consider the case $r=1$. Consider any $x,u$ such that $g(x)=h(u)=\Gamma(x,u)=0$. Then,
	$$
	\langle \nabla g(x),u \rangle = - \langle \nabla g(x),v(x) \rangle .
	$$
	By assumption \ref{regularity assumption}, it holds that $| \langle \nabla g(x), u \rangle| < 1$.
	However, both vectors $\frac{1}{2}\nabla h(u)= u$ and $\nabla g(x)$ belong to the unit sphere. Therefore, the set of vectors $u$ and $\pdv{\Gamma}{u} (x,u) = \nabla g(x)$ is linearly independent.
	
\end{proof}

We study three cases of the state constraint set: cylinder, unit sphere, torus. The cylinder case was previously investigated in \cite{khaliletal.2018_naples} and a sample problem was considered for a specific vector field.

\subsection{Cylinder}\label{sec:cylinder}

Consider the case when the function $g$ representing the state constraint takes the following form
\[  g(x) : = x_1^2+x_2^2-1.  \]
Observe that by virtue of Proposition \ref{proposition: regularity general case}
the regularity condition is satisfied if the vector field verifies (\ref{regularity assumption}),  that is, if the vector field verifies the estimate $|\langle x,v(x) \rangle |  < 1$ for all $x$ such that $g(x)=0$.

Next, we explicit the necessary conditions for a given optimal process $(x^*,u^*,T^*)$ to Problem (\ref{problem}). The adjoint system implies (we use the notation $v^*(t)$ for $v(x^*(t))$, and also for its partial derivatives),

\begin{align}
\label{psi1} 
\dot{\psi}_1(t) & = \left(- \psi_1(t) +  2\mu(t) x_1^*(t)\right) \pdv{v_1^*}{x_1} (t)
\nonumber +\left(-\psi_2(t)  + 2\mu(t) x_2^*(t)\right) \pdv{v_2^*}{x_1} (t)  \\ & \qquad
-\psi_3(t) \pdv{v_3^*}{x_1} (t) + 2\mu(t)u_1^*(t) + 2\mu(t) v_1^*(t),
\end{align}

\begin{align}
\label{psi2} 
\dot{\psi}_2(t) &  = \left(- \psi_1(t) + 2\mu(t) x_1^*(t) \right)  \pdv{v_1^*}{x_2} (t) \nonumber   + \left(  - \psi_2(t)  + 2\mu(t) x_2^*(t) \right) \pdv{v_2^*}{x_2} (t)  \\
&\qquad - \psi_3(t)\pdv{v_3^*}{x_2} (t) + 2\mu(t)u_2^*(t) + 2\mu(t) v_2^*(t),
\end{align}

\begin{align}
\label{psi3} 
\dot{\psi}_3(t) & = \left(- \psi_1(t) + 2\mu(t) x_1^*(t) \right)  \pdv{v_1^*}{x_3} (t) \nonumber  + \left(  - \psi_2(t)  + 2\mu(t) x_2^*(t) \right) \pdv{v_2^*}{x_3} (t) \\ & \qquad - \psi_3(t) \pdv{v_3^*}{x_3} (t).
\end{align}

Moreover, the maximum condition and the non-triviality condition allow us to uniquely find the optimal control $(u_1^*,u_2^*,u_3^*)$ (here, the time dependence, for simplicity, is omitted):

\begin{equation}\label{control u_1}
u_1^* = \frac{\psi_1-2\mu x_1^*}{\sqrt{(\psi_1-2\mu x_1^*)^2 + (\psi_2-2\mu x_2^*)^2 +\psi_3^2}}
\end{equation}
\begin{equation}
u_2^* = \frac{\psi_2-2\mu x_2^*}{\sqrt{(\psi_1-2\mu x_1^*)^2 + (\psi_2-2\mu x_2^*)^2 +\psi_3^2}}
\end{equation}
\begin{equation}\label{control}
u_3^* = \frac{\psi_3}{\sqrt{(\psi_1-2\mu x_1^*)^2 + (\psi_2-2\mu x_2^*)^2 +\psi_3^2}}.
\end{equation}
At the boundary of the state constraint, one has
$$
(x_1^*(t))^2 + (x_2^*(t))^2 = 1,\;\; \Gamma(x^*(t),u^*(t))=0,
$$
where
\[ \Gamma(x,u) = 2x_1(u_1+v_1(x)) +2x_2(u_2+v_2(x)) .  \]
Using these relations, replacing $u_1^*, u_2^*$ and $u_3^*$ by their expressions in $\Gamma(x^*,u^*)=0$, we obtain the quadratic equation with respect to $\mu$:

$$
\begin{array}{l}
\mu^2 - \mu (x_1^*\psi_1 + x_2^*\psi_2)
\displaystyle +\,\frac{(x_1^*\psi_1+x_2^*\psi_2)^2-|\psi|^2(x_1^*v_1^*+x_2^*v_2^*)^2}{4(1 - (x_1^*v_1^*+x_2^*v_2^*)^2)} =0.
\end{array}
$$
The solutions are:
$$
\begin{array}{l}
\displaystyle\mu = \frac{1}{2}(x_1^*\psi_1 + x_2^*\psi_2) \pm\,\frac{1}{2} |x_1^*v^*_1+x_2^*v_2^*| \sqrt{\frac{|\psi|^2-(x_1^*\psi_1+x_2^*\psi_2)^2}{1- (x_1^*v^*_1+x_2^*v_2^*)^2}}.
\end{array}
$$
However, the acceptable solution is:
\begin{equation}\label{mu}
\begin{array}{l}
\displaystyle\mu = \frac{1}{2}(x_1^*\psi_1 + x_2^*\psi_2) +\,\frac{1}{2} |x_1^*v^*_1+x_2^*v_2^*| \sqrt{\frac{|\psi|^2-(x_1^*\psi_1+x_2^*\psi_2)^2}{1- (x_1^*v^*_1+x_2^*v_2^*)^2}}.
\end{array}
\end{equation}

The derived expression for $\mu$ holds at the boundary of the state constraints, that is, on the time intervals on which $g(x^*(t))=0$. Moreover, function $\mu(t)$ is continuous, decreasing, and it is constant on the intervals where $g(x^*(t))<0$.

\subsection{Unit sphere}\label{sec:sphere}

For the unit sphere case, the function $g$ is:
\[  g(x) : = x_1^2+x_2^2+x_3^2-1.  \]
Observe that by virtue of Proposition \ref{proposition: regularity general case}
the regularity condition is satisfied if the vector field verifies (\ref{regularity assumption}), that is, if the vector field verifies the estimate $|\langle x,v(x) \rangle |  < 1$ for all $x$ such that $g(x)=0$.

The adjoint system in this case gives:
\begin{align}
\label{spsi1}
\dot{\psi}_1(t) & = \left(- \psi_1(t) +  2\mu(t) x_1^*(t)\right) \pdv{v_1^*}{x_1} (t)  \nonumber +\left(-\psi_2(t)  + 2\mu(t) x_2^*(t)\right) \pdv{v_2^*}{x_1} (t)  \\
& \qquad + \left(-\psi_3(t)+2\mu(t) x_3^*(t)\right) \pdv{v_3^*}{x_1} (t) + 2\mu(t)u_1^*(t) + 2\mu(t) v_1^*(t),\end{align}

\begin{align}
\label{spsi2} 
\dot{\psi}_2(t) &  = \left(- \psi_1(t) + 2\mu(t) x_1^*(t) \right)  \pdv{v_1^*}{x_2} (t) + \left(  - \psi_2(t)  + 2\mu(t) x_2^*(t) \right) \pdv{v_2^*}{x_2} (t) \nonumber \\
&\qquad + \left(-\psi_3(t)+2\mu(t) x_3^*(t)\right)\pdv{v_3^*}{x_2} (t) + 2\mu(t)u_2^*(t) +2\mu(t) v_2^*(t),
\end{align}

\begin{align}
\label{spsi3} 
\dot{\psi}_3(t) & = \left(- \psi_1(t) + 2\mu(t) x_1^*(t) \right)  \pdv{v_1^*}{x_3} (t) + \left(  - \psi_2(t)  + 2\mu(t) x_2^*(t) \right) \pdv{v_2^*}{x_3} (t) \nonumber \\ & \qquad +  \left(-\psi_3(t)+2\mu(t) x_3^*(t)\right)\pdv{v_3^*}{x_3} (t) + 2\mu(t)u_3^*(t) + 2\mu(t) v_3^*(t).
\end{align}

The maximum condition permits to uniquely define the optimal controls:

\begin{equation}\label{control u_1: sphere}
u_1^* = \frac{\psi_1-2\mu x_1^*}{\sqrt{(\psi_1-2\mu x_1^*)^2 + (\psi_2-2\mu x_2^*)^2 +(\psi_3-2\mu x_3^*)^2}}
\end{equation}
\begin{equation}
u_2^* = \frac{\psi_2-2\mu x_2^*}{\sqrt{(\psi_1-2\mu x_1^*)^2 + (\psi_2-2\mu x_2^*)^2 +(\psi_3-2\mu x_3^*)^2}}
\end{equation}
\begin{equation}\label{control sphere}
u_3^* = \frac{\psi_3 - 2\mu x_3^*}{\sqrt{(\psi_1-2\mu x_1^*)^2 + (\psi_2-2\mu x_2^*)^2 +(\psi_3-2\mu x_3^*)^2}}.
\end{equation}
At the boundary of the state constraint, one has
$$
(x_1^*(t))^2 + (x_2^*(t))^2 + (x_3^*(t))^2  = 1,\;\mbox{ and }\; \Gamma(x^*(t),u^*(t))=0,
$$
where
\[ \Gamma(x,u) = 2x_1(u_1+v_1(x)) +2x_2(u_2+v_2(x)) +2x_3(u_3+v_3(x)) .  \]
By following the same reasoning as in the case of the cylinder, and replacing $u_1^*, u_2^*$ and $u_3^*$ by their expressions in $\Gamma(x^*,u^*)=0$, we obtain the following expression of $\mu$:
\begin{align*} & \mu^2 - \mu \langle x^*, \psi \rangle +\frac{ \langle x^*,\psi \rangle ^2-|\psi|^2\langle x^*, v^* \rangle ^2}{4(1 - \langle x^*, v^* \rangle ^2)} =0. \end{align*}

The acceptable solution is:
\begin{align}\label{mu_sphere}
 \mu & =  \frac{1}{2}\langle x^*, \psi \rangle + \frac{1}{2} |\langle x^*, v^* \rangle  | \sqrt{\frac{|\psi|^2-\langle x^*, \psi \rangle  ^2}{1- \langle x^*, v^* \rangle^2}}.
\end{align}

The derived expression for $\mu$ holds at the boundary of the state constraints, that is, on the time intervals on which $g(x^*(t))=0$. Moreover, function $\mu(t)$ is continuous, decreasing, and it is constant on the intervals where \hbox{$g(x^*(t))<0$}.

\subsection{Torus}

A more non-trivial state constraint is represented by the torus symmetric about the $x_3$-axis. In this case, the function $g$ is:
\[ g(x) =  \left(\sqrt{x_1^2+x_2^2} - R\right)^2 + x_3^2 - 1 ,  \]
where $R$ is the so-called major radius.

Observe that by virtue of Proposition \ref{proposition: regularity general case}
the regularity condition is satisfied if the vector field verifies (\ref{regularity assumption}), that is if, $|w x_1v_1 + w x_2v_2 + x_3v_3| < 1$ for all $ x$ such that $g(x)=0$, where $w:= \frac{\sqrt{x_1^2+x_2^2} - R}{\sqrt{x_1^2+x_2^2}}.$

Denote by $w^*:= \frac{\sqrt{(x_1^*)^2+(x_2^*)^2} - R}{\sqrt{(x_1^*)^2+(x_2^*)^2}}$.
The adjoint system in this case gives:
\begin{align*}\label{psi_torus} \dot{\psi}_1(t) & = \left(- \psi_1(t) +  2\mu(t) w^*(t)x_1^*(t)\right) \pdv{v_1^*}{x_1} (t)  \nonumber +\big(-\psi_2(t)  + 2\mu(t) w^*(t)x_2^*(t)\big) \pdv{v_2^*}{x_1} (t)  \\
& \nonumber \qquad \qquad + \left(-\psi_3(t)+2\mu(t) x_3^*(t)\right) \pdv{v_3^*}{x_1} (t)  \\ & + 2\mu(t)\bigg[(u_1^*(t) +  v_1^*(t))w^*(t) + (u_1^*(t)+v^*_1(t))\frac{x^{*2}_1(t) R}{(x^{*2}_1(t)+x^{*2}_2(t))^{\frac{3}{2}}}  \\ & \qquad \qquad   + (u_2^*(t)+v^*_2(t))\frac{x^{*}_1(t)x^{*}_2(t) R}{(x^{*2}_1(t)+x^{*2}_2(t))^{\frac{3}{2}}}     \bigg], \nonumber \end{align*}

\begin{align*}
\dot{\psi}_2(t) &  =  \left(- \psi_1(t) +  2\mu(t) w^*(t)x_1^*(t)\right) \pdv{v_1^*}{x_2} (t)  \nonumber +\left(-\psi_2(t)  + 2\mu(t) w^*(t)x_2^*(t)\right) \pdv{v_2^*}{x_2} (t)  \\
& \nonumber \qquad \qquad + \left(-\psi_3(t)+2\mu(t) x_3^*(t)\right) \pdv{v_3^*}{x_2} (t)  \\ & + 2\mu(t)\bigg[(u_2^*(t) +  v_2^*(t))w^*(t) + (u_2^*(t)+v^*_2(t))\frac{x^{*2}_2(t) R}{(x^{*2}_1(t)+x^{*2}_2(t))^{\frac{3}{2}}}  \\ & \qquad \qquad   + (u_1^*(t)+v^*_1(t))\frac{x^{*}_1(t)x^{*}_2(t) R}{(x^{*2}_1(t)+x^{*2}_2(t))^{\frac{3}{2}}}  \bigg], \nonumber
\end{align*}

\begin{align*}
\dot{\psi}_3(t) & = \left(- \psi_1(t) + 2\mu(t)w^*(t) x_1^*(t) \right) \pdv{v_1^*}{x_3} (t) \nonumber  + \left(  - \psi_2(t)  + 2\mu(t)w^*(t) x_2^*(t) \right) \pdv{v_2^*}{x_3} (t) \\ & \qquad +  \left(-\psi_3(t)+2\mu(t) x_3^*(t)\right)\pdv{v_3^*}{x_3} (t) + 2\mu(t)u_3^*(t) + 2\mu(t) v_3^*(t).
\end{align*}

From the maximum condition, we obtain the optimal controls:

\begin{equation*}\label{control u_1: torus}
u_1^* = \frac{\psi_1-2\mu w^* x_1^*}{\sqrt{(\psi_1-2\mu w^* x_1^*)^2 + (\psi_2-2\mu w^* x_2^*)^2 +(\psi_3-2\mu x_3^*)^2}}
\end{equation*}
\begin{equation*}\label{control u_2: torus}
u_2^* = \frac{\psi_2-2\mu  w^*x_2^*}{\sqrt{(\psi_1-2\mu  w^*x_1^*)^2 + (\psi_2-2\mu  w^*x_2^*)^2 +(\psi_3-2\mu x_3^*)^2}}
\end{equation*}
\begin{equation*}\label{control u_3: torus}
u_3^* = \frac{\psi_3 - 2\mu x_3^*}{\sqrt{(\psi_1-2\mu w^* x_1^*)^2 + (\psi_2-2\mu w^* x_2^*)^2 +(\psi_3-2\mu x_3^*)^2}}.
\end{equation*}

We have
\[ \Gamma(x,u) = 2wx_1(u_1+v_1)+2wx_2(u_2+v_2)+ 2x_3(u_3+v_3) ,  \]
where $\displaystyle w:= \frac{\sqrt{x_1^2+x_2^2} - R}{\sqrt{x_1^2+x_2^2}}.$ Therefore, at the boundary of the state constraint, the following quadratic equation with respect to $\mu$ arises:
\begin{align*}
\mu^2 - & \mu  (w^*x_1\psi_1+w^*x_2\psi_2+x_3\psi_3)  \\ & + \frac{(w^*x_1\psi_1+w^*x_2\psi_2+x_3\psi_3) ^2 - |\psi|^2 (w^*x_1v_1+w^*x_2v_2+x_3v_3)^2}{4 (1 - (w^*x_1v_1+w^*x_2v_2+x_3v_3) ^2)} =0.
\end{align*}

The acceptable solution is
\begin{align*}
\mu = &  \frac{1}{2}(w^*x_1^*\psi_1 + w^*x_2^*\psi_2 + x_3^*\psi_3) \\ & + \frac{1}{2} |w^*x_1^*v^*_1+w^*x_2^*v_2^*+x_3^*v_3^*| \sqrt{\frac{|\psi|^2-(w^*x_1^*\psi_1+w^*x_2^*\psi_2+x_3^*\psi_3)^2}{1- (w^*x_1^*v^*_1+w^*x_2^*v_2^*+x_3^*v_3^*)^2}}.
\end{align*}
The derived expression for $\mu$ holds at the boundary of the torus. Moreover, function $\mu(t)$ is continuous, decreasing, and it is constant on the intervals where $x^*(t)$ is within the torus interior.
Note that when $R=0$, we recover the formulae in the case of the unit sphere.

\section{Numerical results}\label{Section_4}

The proposed computational algorithm is based on the maximum principle, that is, on the conditions (a)--(e) stated in section \ref{Section_3}. As it has been shown in the previous section, the regularity condition enabled us to obtain the expression for the measure multiplier through the state and adjoint variables $x(t)$ and $\psi(t)$ respectively. Upon substitution of this expression as well as the ones for extremal controls into the adjoint system, a two-point boundary-value problem is obtained:
$$
\dot{x}=u^*+v(x),\quad x(0)=A,\ x(T^*)=B,
$$
together with (\ref{psi1})-(\ref{mu}) for the cylinder or (\ref{spsi1})-(\ref{mu_sphere}) for the unit sphere case. Here, the fluid flow $v(x)$, the starting point, $A$, and the terminal point, $B$, are given, and $T^*$, the optimal travelling time, is unknown. The problem is solved numerically by a variant of the shooting method described below. See for instance \cite{nr,bvp} for an overview on the numerical methods for two-points boundary-value problems.

The measure multiplier $\mu(t)$ is non-constant only when the corresponding trajectory $x(t)$ lies at the boundary of the state constraint. The continuity of the measure multiplier is used for computation of the junction points, that is the points where the extremal arc meets the boundary of the state constraint.

Let us briefly outline the numerical algorithm used for the computation of the field of extremals.

We set $\mu(0)=0$. In this regard, see Remark 3.1 in \cite{jota}. Then, condition (e) featured in section  \ref{Section_3} enables us to consider the initial value for $\psi$ from the surface of the unit sphere, $|\psi(0)|=1$. This unit sphere surface is parameterized by the two angles $\theta\in[0,\pi]$ and $\phi\in[0,2\pi)$:
\begin{align*} \psi_1(0)&=\sin\theta\cos\phi, \quad 
\psi_2(0)=\sin\theta\sin\phi, \quad  \psi_3(0)=\cos\theta. \end{align*}

For a given value of $\theta$ and $\phi$, the system of governing equations is integrated numerically by the fourth-order Runge-Kutta method. Using the bisection method in $\theta$, and $\phi$, the trajectories not meeting the boundary and satisfying \hbox{$|x(T^*)-B|<10^{-3}$} are computed, being $10^{-3}$ the required accuracy.

The bisection method is also used to compute the junction points of the trajectories meeting
the boundary, but only those trajectories for which $\mu$ is continuous at the junction
point, i.e. $|\mu|<10^{-3}$, are selected. For such trajectories, the equations governing
the overall system are integrated further in time, and, according to (\ref{control
u_1})--(\ref{mu}) for the cylinder and to (\ref{control u_1: sphere})--(\ref{mu_sphere}) for
the unit sphere case, the resulting trajectory follows the boundary and never leave it. Integrating the system along the boundary, at each time step, $\bar{t}$, we also compute trajectories ``leaving'' the boundary -- corresponding to the initial conditions $x=x(\bar{t}),$ $\psi=\psi(\bar{t})$ and assuming $\mu=\mu(\bar{t})$ to be constant for all $t>\bar{t}$. If such trajectory satisfies \hbox{$|x(T^*)-B|<10^{-3}$} for $T^*>\bar{t}$, it belongs to an extremal, together with the corresponding boundary segment and the trajectory entering the boundary. 

In the first example, we apply this numerical procedure to the Problem~(\ref{problem}) with
the state constraints given by the cylinder (see section~\ref{sec:cylinder}) for the fluid flow
$v(x)=(0,\,0,\,x_1^2+x_2^2)$ satisfying the regularity condition (\ref{regularity
assumption}), \hbox{$A=(0.2,\,-0.5,\,0)$} and \hbox{$B=(0,\,0.5,\,5)$}. This vector field $v(x)$, which corresponds to a fluid flowing faster on the boundary of the cylinder, is chosen to obtain extremals with active boundary. Indeed, travelling along the boundary is more beneficial, in the sense that such trajectories
take less time to join their endpoints. The set of extremals is shown in the space of
$(x_1,\,x_2,\,x_3)$ in Figure~\ref{f1}, as well as its projection onto the plane $x_3=0$. 
The set is constituted of two extremals -- one containing a boundary segment (red line), 
and one not meeting the boundary (black line).

\begin{figure}[t]
\begin{minipage}[t]{0.98\linewidth}
	\includegraphics[scale=.9]{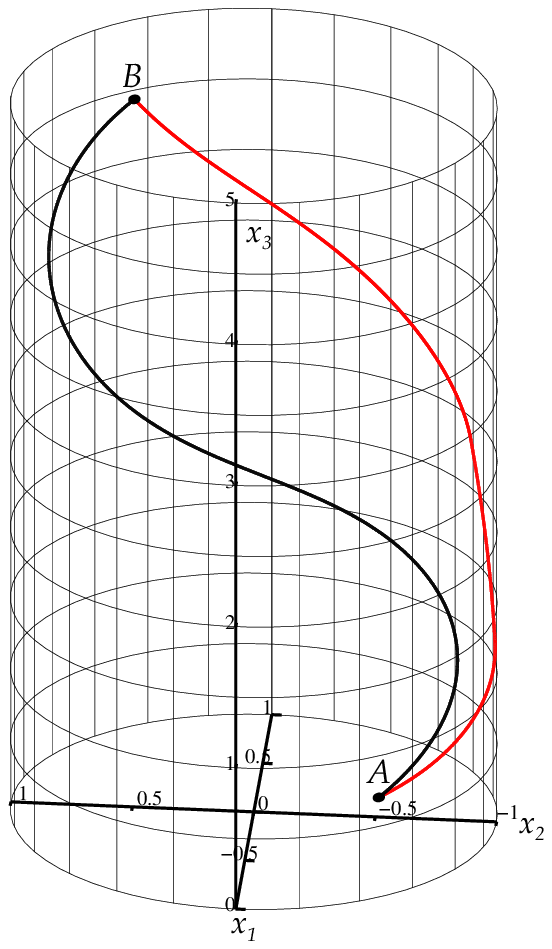} \quad \includegraphics[scale=1.1]{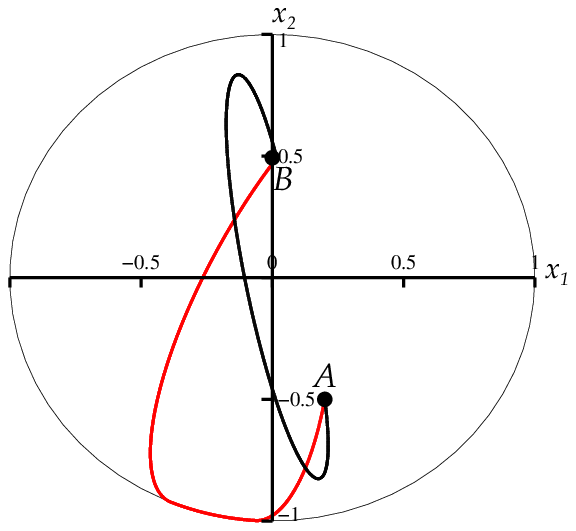}
\end{minipage}%
\caption{Left figure: set of extremals for $v(x)=(0,\,0,\,x_1^2+x_2^2)$, $A=(0.2\,-0.5,\,0)$
	and $B=(0,\,0.5,\,5)$. Right figure: projection of the set of extremals on the plane $x_3=0$.\label{f1}}
\end{figure}

\begin{figure}[t]
\begin{minipage}[t]{1.2\linewidth}
 \includegraphics[scale=.95]{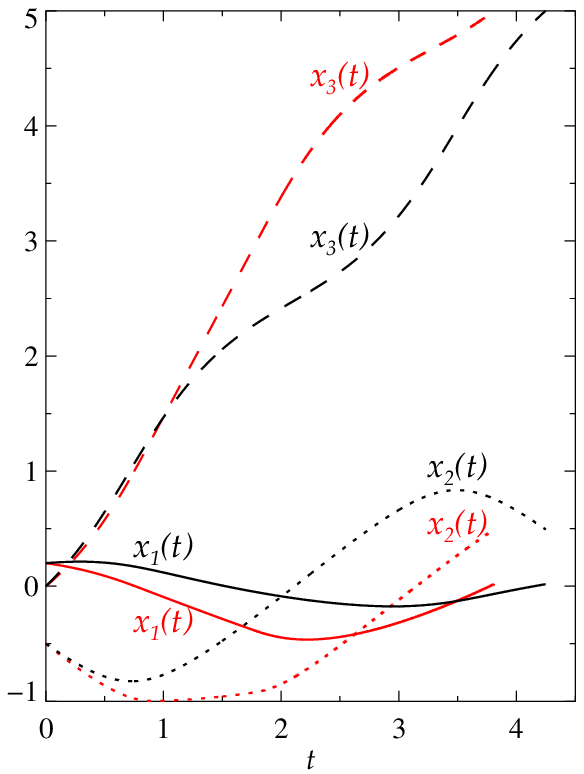} \quad
	\includegraphics[scale=.95]{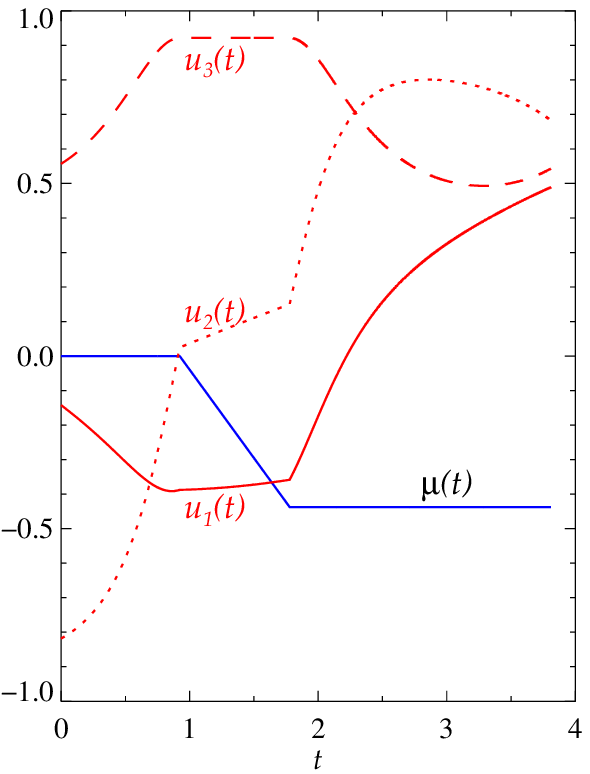}
\end{minipage}%
\caption{Left figure: evolution of each component of the extremals shown in Figure~\ref{f1}. 
The components labeled by black (red) correspond to the extremal not meeting the boundary
(possessing a boundary segment). Right figure: evolution of each component of the control 
variables $u_1^*(t)$ (red solid), $u_2^*(t)$ (red dotted) and $u_3^*(t)$ (red dashed) 
as well as of the Lagrange multiplier $\mu(t)$ (blue solid) corresponding to the optimal 
extremal shown in red in Figure~\ref{f1}.\label{f2}}
\end{figure}

The optimal extremal (red line) has a boundary segment (for
\hbox{$0.92\le t\le 1.78$}), the corresponding travelling time is $T^*=3.81$; the path from
$A$ to $B$ along the extremal not meeting the boundary (black line) takes $4.25$ time units. Time
evolution of each component of both extremals is shown in Figure~\ref{f2}. For the optimal
extremal, the evolution of the control, $u^*(t)$, as well as of the Lagrange multiplier
$\mu(t)$ is also shown in Figure ~\ref{f2}.

By construction, the multiplier $\mu(t)$ is constant when the trajectory is in the interior
of the state constraint (i.e. for $t<0.92$ and $t>1.78$). Along the boundary (see right
panel of Figure~\ref{f2}), the measure multiplier demonstrates linear behavior with respect to time. 
This can be shown analytically for the considered fluid flow.

Next, we consider the spherical case described in section~\ref{sec:sphere}. 
We take the following flow representing a horizontal vortex,
\begin{equation}
\label{sv}
v(x)=\left( \dfrac{4}{1+{\rm e}^{-6x_2}}-2,  -\dfrac{4}{1+{\rm e}^{-6x_1}}+2,  0 \right),
\end{equation}
satisfying the regularity condition (\ref{regularity assumption}).
The starting and terminal points are chosen to be $A=(0.6,\,0.6,\,0.4)$ and $B=(-0.6,\,-0.6,\,0)$, 
respectively. The set of computed extremals, shown in Figure~\ref{fs}, is represented by three
curves, two of them do not meet the boundary (shown in black and red) and one meeting the boundary 
(shown in blue). Evolution of coordinates of the extremals not meeting the boundary as well as 
controls for the optimal extremal are shown in Figure~\ref{contrs}.

Travelling along the extremal shown in black takes 1.73, while travelling along the optimal
extremal (shown in red) is roughly twice faster (0.81 time units). Although the flow is
faster closer to the boundary, travelling along it is not favorable (as in the previous
example), it takes 1.98 time units.

\begin{figure}[t]
	\begin{minipage}[t]{1.5\linewidth}\hskip-14ex
		\includegraphics[scale=1.4]{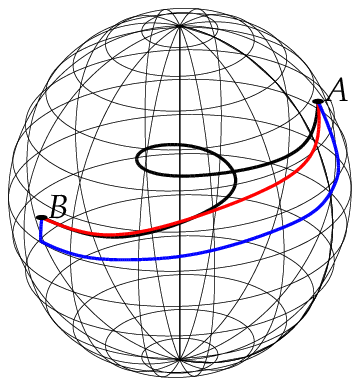} \hskip-6ex \includegraphics[scale=1.2]{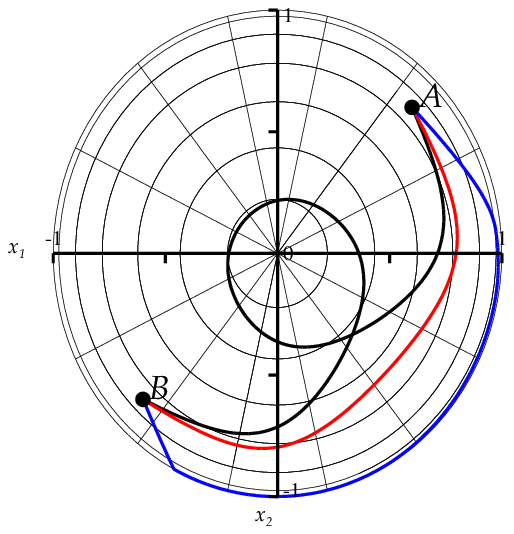}
	\end{minipage}%
	\caption{Set of extremals for the unit sphere case, the flow (\ref{sv}), $A=(0.6,\,0.6,\,0.4)$ and $B=(-0.6,\,-0.6,\,0)$. 
		Projection of the set of extremals on the plane $x_3=0$ is shown in the figure on the right.\label{fs}}
\end{figure}

%\begin{figure}[t!]
%\centerline{\includegraphics[scale=1.7]{sphere1}}
%\vspace*{-1cm}
%\centerline{\includegraphics[scale=1.2]{sphere2}}
%\caption{Set of extremals for the unit sphere case, the flow (\ref{sv}), $A=(0.6,\,0.6,\,0.4)$ and $B=(-0.6,\,-0.6,\,0)$. 
%Projection of the set of extremals on the plane $x_3=0$ is shown in the bottom figure.\label{fs}}
%\end{figure}

\begin{figure}[t]
\centerline{\includegraphics[scale=.95]{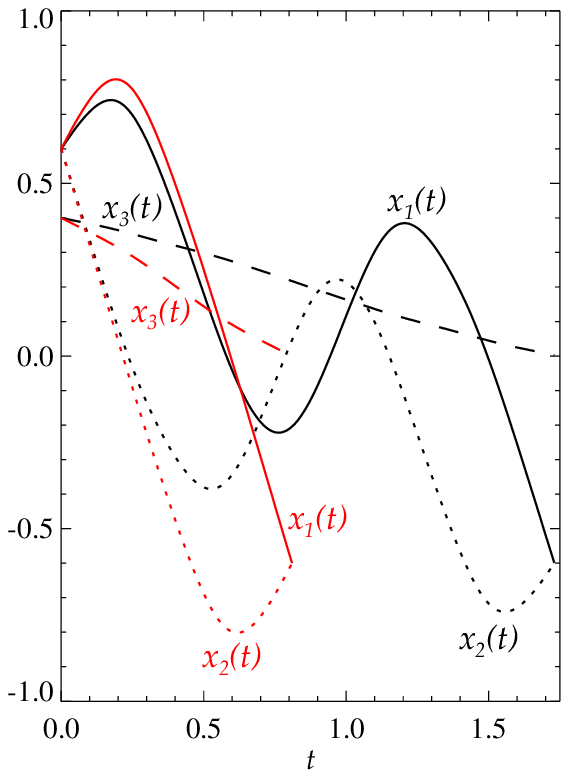} \quad \includegraphics[scale=.95]{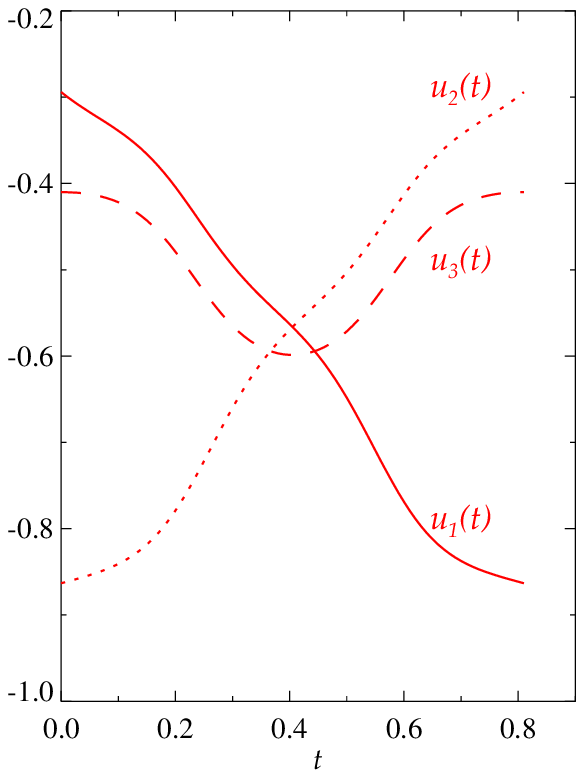}}
\caption{Left figure: evolution of each component of the extremals not meeting the boundary 
shown in Figure~\ref{fs}. Right figure: evolution of each component of the control 
variables $u_1^*(t)$ (red solid), $u_2^*(t)$ (red dotted) and
$u_3^*(t)$ (red dashed) corresponding to the optimal extremal.\label{contrs}}
\end{figure}

\section{Conclusions}\label{Section_5}

In this paper, a time-optimal control problem in a three-dimensional steady vector
flow field is analyzed in the presence of a state constraint given in the form of cylinder,
unit sphere, and torus. Regularity conditions with respect to the state constraints and the vector
flow field in study are considered. The maximum principle is applied to the problem in
question under these regularity conditions. It is shown how the regularity condition, which
implies the continuity of the measure multiplier, assists in solving numerically the
boundary-value problem associated to the maximum principle. In this context, explicit
formulae for the measure multiplier and the extremal controls are obtained as expressions
with respect to the state and adjoint functions. These expressions are further substituted into the
boundary-value problem, and solved by a variant of the shooting method. The obtained results are
numerically illustrated for several three-dimensional sample vector flow fields and state
constraints, and the corresponding set of extremals is plotted.

There are several avenues to extend this work. One possible way may consist in decreasing the degree of smoothness of the mapping defining the state constraints. Note that the proposed computational method essentially relies on the existence of the second order derivative of the mapping $g$. At the same time, an appropriate approximation sequence of smooth problems may assist in solving numerically the control problem in which $g$ is of merely $C^1$-class. Another avenue may consist in considering vector-valued state constraints. There are a number of possibilities to address this problem that the research effort should consider. Still, another direction is the consideration of general nonlinear dynamics. In this case, we might study problems for which the current framework holds and problems for which regularization techniques have to be applied. Finally, here the vector field mapping is defined a priori. There are interesting problems, for which this not the case, and the control action may affect the  evolution of the vector field mapping. This latter problem is of significant complexity but also relevant for many applications.

\section*{Acknowledgements}

The first two authors are supported by the Russian Science Foundation during the project
19-11-00258 carried out in the Federal Research Center ``Informatics and Control'' of the
Russian Academy of Sciences.  The valuable support of FCT (Portugal), research funding
granted to the SYSTEC R\&D Unit under project NORTE-01-0145-FEDER-000033 -- STRIDE (COMPETE
2020), is also highly acknowledged.

\section*{References}

\bibliography{mybibfile}

\end{document}